\newtheorem{thm}{Theorem}
\newtheorem{lem}[thm]{Lemma}
\theoremstyle{definition}
\newtheorem{defn}[thm]{Definition}
\newtheorem{rmk}[thm]{Remark}
\newcommand{\CPb}{\overline{\mathbb{CP}}{}^{2}}
\newcommand{\CP}{{\mathbb{CP}}{}^{2}}
\newcommand{\R}{\mathbb{R}}
\newcommand{\Z}{\mathbb{Z}}
\newcommand{\M}{M_{k,n}^{p,r}}
\newcommand{\sM}{M_{k,1}^{p,r}}
\newcommand{\ssM}{M_{k,1}^{1,1}}
\newcommand{\Mkn}{M_{k,n}^{1,1}}
\newcommand{\Mknm}{M_{k,n}^{1,1}(m)}
\title[Exotic 4-manifolds with zero signature]
{Exotic smooth structures on \\
4-manifolds with zero signature}
\author[A. Akhmedov]{Anar Akhmedov}
\address{School of Mathematics,
University of Minnesota, 
Minneapolis, MN, 55455, USA}
\email{akhmedov@math.umn.edu}
\author[B. D. Park]{B. Doug Park}
\address{Department of Pure Mathematics, 
University of Waterloo,
Waterloo, ON, N2L 3G1, Canada}
\email{bdpark@math.uwaterloo.ca}
\date{March 26, 2010.  Revised on June 3, 2010}
\subjclass[2010]{Primary 57R55; Secondary 57R17, 57R57}
\begin{document}

\begin{abstract}
For every integer $k\geq 2$, we construct infinite families of mutually 
nondiffeomorphic irreducible smooth structures on the topological $4$-manifolds 
$(2k-1)(S^2\times S^2)$ and $(2k-1)(\CP\#\CPb)$, 
the connected sums of $2k-1$ copies of $S^2\times S^2$ and $\CP\#\CPb$.  
\end{abstract}

\maketitle

\section{Introduction}

Let $M$\/ denote a closed smooth $4$-manifold.  
Given an integer $k\geq 1$, let $kM$\/ denote the connected sum of $k$\/ copies of $M$.  
Let $\CP$ denote the complex projective plane and let $\CPb$ denote the underlying 
smooth $4$-manifold $\CP$ equipped with the opposite orientation.  
Let $\CP\#\CPb$ denote the connected sum of $\CP$ and $\CPb$.  
To state our results, it will be convenient to introduce the following terminology.  

\begin{defn}\label{defn: infty2}
We say that a $4$-manifold $M$\/ has\/ $\infty^2$-\emph{property}\/ if there exist infinitely many mutually nondiffeomorphic irreducible symplectic $4$-manifolds and infinitely many mutually nondiffeomorphic irreducible nonsymplectic $4$-manifolds, all of which are homeomorphic to $M$.  
\end{defn}

The main goal of this paper is to prove the following.  

\begin{thm}\label{thm: main}
Both\/ $(2k-1)(S^2\times S^2)$ and $(2k-1)(\CP\#\CPb)$ have $\infty^2$-property  
for every integer $k\geq 2$.
\end{thm}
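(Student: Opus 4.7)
For each $k\geq 2$, the plan is to first build a simply connected symplectic seed $X_k$ homeomorphic to $(2k-1)(S^2\times S^2)$ (resp.\ $(2k-1)(\CP\#\CPb)$), and then produce an infinite symplectic family and an infinite nonsymplectic family in the same homeomorphism class by Fintushel--Stern knot surgery along a preserved Lagrangian torus. A natural seed is a product of surfaces $\Sigma_g\times\Sigma_h$ with $(g-1)(h-1)=k$: this is symplectic with signature zero, Euler characteristic $4k$, and contains many homologically essential Lagrangian tori. A suitable pattern of $(\pm 1)$-Luttinger surgeries along these tori, presumably encoded in the family $\M$ and its specialisations $\sM$, $\ssM$, $\Mkn$, should yield a simply connected symplectic manifold, since Luttinger surgery preserves both $\chi$ and $\sigma$ while allowing one to kill generators of $\pi_1$. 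The result $X_k$ then has $b_2^\pm=2k-1$ and zero signature, and by Freedman's classification it is homeomorphic to $(2k-1)(S^2\times S^2)$ or $(2k-1)(\CP\#\CPb)$ according to whether its intersection form is even or odd. Both parities should be attainable by adjusting the surgery pattern, or by an auxiliary blow-up and symplectic blow-down that toggles the spin type without changing the homeomorphism type.

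For the infinite family, I would keep intact a Lagrangian torus $T\subset X_k$ satisfying $\pi_1(X_k\setminus T)=1$ and apply Fintushel--Stern knot surgery with a knot $K\subset S^3$ to obtain $\Mknm$. The Fintushel--Stern formula $SW_{X_k(K)}=SW_{X_k}\cdot\Delta_K(t^2)$ shows that distinct Alexander polynomials produce pairwise non-diffeomorphic smooth structures. Letting $K$ run over infinitely many fibered knots yields the symplectic family, since Fintushel--Stern surgery is symplectic when $K$ is fibered; letting $K$ run over infinitely many non-fibered knots with pairwise distinct Alexander polynomials yields the nonsymplectic family, because a symplectic structure on the resulting manifold would force its canonical class to be a Seiberg--Witten basic class of multiplicity $\pm 1$ by Taubes, which is incompatible with the computed SW polynomial when $K$ is non-fibered (the leading coefficient of $\Delta_K$ fails the required normalisation). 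Irreducibility of each member follows from the standard argument that a smooth connected-sum decomposition with both summands having $b_2^+>0$ forces all SW invariants to vanish, combined with minimality of the symplectic model $X_k$ and stability of minimality under knot surgery.

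The main obstacle I anticipate is the $\pi_1$ bookkeeping in the Luttinger-surgery step: one must exhibit explicit Lagrangian tori and surgery coefficients in $\Sigma_g\times\Sigma_h$ (or a closely related block) that simultaneously trivialize the fundamental group of the resulting manifold and leave at least one essential torus available for Fintushel--Stern, and one must verify the spin/non-spin toggle for every $k\geq 2$. A secondary bookkeeping step is to produce infinitely many distinct smooth structures of each flavour, which reduces to exhibiting infinitely many fibered (resp.\ non-fibered) knots with pairwise distinct Alexander polynomials, a classical fact.
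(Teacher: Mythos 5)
Your overall architecture (a surface-based symplectic seed, Luttinger/torus surgeries to kill $\pi_1$, Freedman to pin down the homeomorphism type, Seiberg--Witten invariants plus Taubes to distinguish smooth structures and detect nonsymplectic members, and the vanishing-of-SW-under-connected-sum argument for irreducibility) matches the paper's strategy. But two of the steps you defer are exactly where the content of the theorem lies, and one of them is handled incorrectly. First, the seed: you take the product $\Sigma_g\times\Sigma_h$ with $(g-1)(h-1)=k$ and ``presume'' a Luttinger pattern killing $\pi_1$. The paper does \emph{not} use a product; it uses the free quotient $X_k=(\Sigma_2\times\Sigma_{2k+1})/(\Z/2)$, a nontrivial genus~$2$ bundle over $\Sigma_{k+1}$. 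This choice is essential, not cosmetic: the quotient has smaller $b_1$ (namely $2k+4$, so only $2k+4$ torus surgeries are needed), and the monodromy relations $a_2=\tilde{c}_{k+1}^{-1}a_1\tilde{c}_{k+1}$, $b_2=\tilde{c}_{k+1}^{-1}b_1\tilde{c}_{k+1}$, etc.\ supply exactly the extra relations that make the group die after the commutator relations introduced by the surgeries. For the honest product, no such computation is known to close up (for $k=1$ it is precisely the notorious exotic $S^2\times S^2$ problem), so the step you flag as ``the main obstacle'' is in fact the theorem, and your proposal does not supply it.

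Second, the nonspin case. All the surviving second-homology classes after Luttinger surgery on a product of surfaces are represented by tori of square zero paired hyperbolically, so the intersection form stays even; ``adjusting the surgery pattern'' cannot produce an odd form. Your fallback of a blow-up followed by a blow-down does not work either: blowing up raises $b_2$ by one, so it changes the homeomorphism type rather than toggling the parity at fixed rank $4k-2$. The paper's mechanism is a generalized logarithmic transform of even multiplicity $m$ along a torus $q(a_1'\times c_2')$ whose complement is shown to be simply connected (Lemma~\ref{lem: complement}); the characteristic-class computation $w_2\equiv(m-1)T\pmod 2$ with $T$ primitive is what produces the nonspin manifolds homeomorphic to $(2k-1)(\CP\#\CPb)$ while preserving $e$, $\sigma$, and $\pi_1$. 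Finally, a smaller technical point: the clean knot-surgery formula $SW_{X_K}=SW_X\cdot\Delta_K(t^2)$ requires the torus to be c-embedded (contained in a cusp neighborhood), which a Lagrangian torus surviving the Luttinger pattern need not be; the paper instead gets its infinite families from a $-1/n$ torus surgery and an $m$-fold log transform, computing the invariants via the gluing formula along $T^3$, which is the technically safe route here.
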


It was already proved in \cite{AP: spin, AP: nonspin} that $(2k-1)(S^2\times S^2)$ 
and $(2k-1)(\CP\#\CPb)$ have $\infty^2$-property  
when $k\geq 138$ and $k\geq 25$, respectively.  
At the moment, 
it is unknown to the authors whether there is any overlap between the $4$-manifolds constructed in 
\cite{AP: spin, AP: nonspin} and in this paper.  
The proof of Theorem~\ref{thm: main} is given in 
Sections~\ref{sec: model}--\ref{sec: (2k-1)(PPb)}.  
Our strategy is to apply the `reverse engineering' technique of \cite{FPS} to 
a suitably chosen nontrivial genus 2 surface bundle over a genus $k+1$ surface.

\section{Model complex surfaces}
\label{sec: model}

Let $\Sigma_g$ denote a closed genus $g$\/ Riemann surface.  
For every integer $k\geq 1$, 
there is a free $\Z/2$ action on a genus $2k+1$ surface, 
$\tau_{k+1}:\Sigma_{2k+1}\rightarrow\Sigma_{2k+1}$, 
by rotating 180 degrees about the `middle' $(k+1)$-th hole, 
such that $\Sigma_{2k+1}/\langle\tau_{k+1}\rangle=\Sigma_{k+1}$.  
When $k=1$, we just get $\tau_2$ in \cite{AP: S2xS2}.  
Let $\tau_1$ be the elliptic involution on $\Sigma_2$ in 
Section~2 of \cite{AP: S2xS2}.  
There exists a free $\Z/2$ action $(\tau_1,\tau_{k+1})$ on the product $\Sigma_2\times\Sigma_{2k+1}$ for every integer $k\geq 1$.  
Let $X_k$ denote the quotient space and 
let $q:\Sigma_2\times\Sigma_{2k+1}\rightarrow X_k$ denote the quotient map.  
$X_k$ is a minimal complex surface of general type (cf.~\cite{lopes-pardini}), 
which is the total space of a nontrivial genus 2 surface bundle over a genus $k+1$ surface.  
We have $e(X_k)=e(\Sigma_2\times\Sigma_{2k+1})/2=(-2)(-4k)/2=4k$, $\sigma(X_k)=0$, 
$b_1(X_k)=2k+4$ and $b_2(X_k)=8k+6$.  

Let $\{a_1,b_1,a_2,b_2\}$ and $\{c_1,d_1,\dots,c_{2k+1},d_{2k+1}\}$ be the 
set of simple closed curves representing the standard generators of 
$\pi_1(\Sigma_2,z_0)$ and $\pi_1(\Sigma_{2k+1},w_0)$, respectively.  
For the sake of brevity, we will usually abuse notation and write 
$a_1=q(a_1\times\{w_0\})$ and $c_1=q(\{z_0\}\times c_1)$, etc.  
The next lemma is the analogue of Lemmas 5 and 6 in \cite{AP: S2xS2}.  

\begin{lem}\label{lem: gen Xk}
The following\/ $2k+4$ loops generate $\pi_1(X_k):$
\begin{equation*}
a_1,b_1,c_1,d_1,\dots,c_k,d_k,
\tilde{c}_{k+1},d_{k+1}.  
\end{equation*}
Moreover, the loops $c_1,d_1,\dots,c_k,d_k,
\tilde{c}_{k+1},d_{k+1}$ represent elements of infinite order
in $\pi_1(X_k)$.
\end{lem}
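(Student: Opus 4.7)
The plan is to use the free double cover $q: \Sigma_2 \times \Sigma_{2k+1} \to X_k$, which induces the short exact sequence
\[
1 \to \pi_1(\Sigma_2 \times \Sigma_{2k+1}) \xrightarrow{q_*} \pi_1(X_k) \xrightarrow{p} \Z/2 \to 1.
\]
The image of $q_*$ is generated by the standard $4k+6$ loops $a_1, b_1, a_2, b_2, c_1, d_1, \ldots, c_{2k+1}, d_{2k+1}$, and $\pi_1(X_k)$ is generated by these together with any single element mapping to the nontrivial class in $\Z/2$. I would take the basepoint $z_0 \in \Sigma_2$ to be a fixed point of the hyperelliptic involution $\tau_1$, and define $\tilde{c}_{k+1}$ as the projection to $X_k$ of a path in $\Sigma_2 \times \Sigma_{2k+1}$ from $(z_0, w_0)$ to $(z_0, \tau_{k+1}(w_0))$ that traverses half of the loop $\{z_0\} \times c_{k+1}$. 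Then $p(\tilde{c}_{k+1})$ is nontrivial and $\tilde{c}_{k+1}^2 = c_{k+1}$ in $\pi_1(X_k)$, making $c_{k+1}$ itself redundant.

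Step 2 is to eliminate the remaining redundancies by exploiting the fact that conjugation by $\tilde{c}_{k+1}$ on the index-$2$ subgroup realizes the deck automorphism $(\tau_1, \tau_{k+1})_*$. Geometrically, $\tau_{k+1}$ interchanges the left and right halves of $\Sigma_{2k+1}$ about the central hole, sending each pair $(c_j, d_j)$ with $j \leq k$ to a conjugate of $(c_{2k+2-j}^{\pm 1}, d_{2k+2-j}^{\pm 1})$. Hence $c_{k+2}, \ldots, c_{2k+1}$ and $d_{k+2}, \ldots, d_{2k+1}$ become words in $c_1, d_1, \ldots, c_k, d_k$ and $\tilde{c}_{k+1}$. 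Using the explicit description of $\tau_1$ in Section~2 of \cite{AP: S2xS2}, an analogous calculation expresses $a_2, b_2$ in terms of $a_1, b_1$ and $\tilde{c}_{k+1}$, leaving exactly the claimed list of $2k+4$ generators.

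For the infinite-order statement, the key observation is that $q_*$ is injective, so it suffices to check infinite order inside $\pi_1(\Sigma_2 \times \Sigma_{2k+1}) = \pi_1(\Sigma_2) \times \pi_1(\Sigma_{2k+1})$. Each of $c_1, d_1, \ldots, c_k, d_k, d_{k+1}$ is a standard generator of a surface group of genus at least $2$ and therefore has infinite order there; and $\tilde{c}_{k+1}^2 = c_{k+1}$ has infinite order, which forces $\tilde{c}_{k+1}$ itself to have infinite order in $\pi_1(X_k)$.

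The main obstacle is Step 2: one must keep careful track of how $\tau_1$ and $\tau_{k+1}$ act on the specific standard generators, including the auxiliary conjugating arcs forced by the fact that both involutions move the natural basepoints used to define those generators. The $k=1$ case of this calculation is essentially Lemmas~5 and~6 of \cite{AP: S2xS2}; the present statement should then follow by a straightforward bookkeeping extension accounting for the $k-1$ additional pairs of handles in $\Sigma_{2k+1}$ that are swapped by $\tau_{k+1}$.
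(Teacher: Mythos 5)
Your proof of the generating statement is essentially the paper's: both arguments use the extension $1\to q_*\pi_1(\Sigma_2\times\Sigma_{2k+1})\to\pi_1(X_k)\to\Z/2\to 1$ together with the lift $\tilde{c}_{k+1}$ of the nontrivial deck transformation, and then the conjugation relations $a_2=\tilde{c}_{k+1}^{-1}a_1\tilde{c}_{k+1}$, $b_2=\tilde{c}_{k+1}^{-1}b_1\tilde{c}_{k+1}$, $c_{2k+2-j}=\tilde{c}_{k+1}^{-1}c_j\tilde{c}_{k+1}$, $d_{2k+2-j}=\tilde{c}_{k+1}^{-1}d_j\tilde{c}_{k+1}$ (plus $\tilde{c}_{k+1}^2=c_{k+1}$) to discard the redundant generators; like you, the paper defers the basepoint and sign bookkeeping to the $k=1$ computation in \cite{AP: S2xS2}. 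The one genuine divergence is the infinite-order claim: you go \emph{up} the covering, using injectivity of $q_*$, the fact that the relevant elements of $\pi_1(\Sigma_2)\times\pi_1(\Sigma_{2k+1})$ have infinite order, and the identity $\tilde{c}_{k+1}^2=c_{k+1}$ to handle $\tilde{c}_{k+1}$; the paper instead goes \emph{down} the fibration, noting that the bundle projection $X_k\to\Sigma_{k+1}$ sends all $2k+2$ loops in question to the standard generators of $\pi_1(\Sigma_{k+1})$, which have infinite order since $k+1\geq 2$. Both are correct one-line arguments; the paper's version treats $\tilde{c}_{k+1}$ on the same footing as the other loops without the squaring step, while yours makes the role of the index-two subgroup more explicit.
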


\begin{proof}
As in \cite{AP: S2xS2}, we can verify that 
\begin{align*}
a_2=\tilde{c}_{k+1}^{-1} a_1\tilde{c}_{k+1}, & \ \ 
b_2=\tilde{c}_{k+1}^{-1} b_1 \tilde{c}_{k+1}, \\
c_{2k+2-j}=\tilde{c}_{k+1}^{-1} c_j \tilde{c}_{k+1}, & \ \ 
d_{2k+2-j}=\tilde{c}_{k+1}^{-1} d_j \tilde{c}_{k+1}, 
\end{align*}
where $j=1,2,\dots,k$.  
The bundle projection map
\begin{equation*}
X_k = \frac{\Sigma_2\times \Sigma_{2k+1}}{\Z/2} \longrightarrow 
\frac{\Sigma_{2k+1}}{\Z/2} = \Sigma_{k+1}
\end{equation*}
maps $c_1,d_1,\dots,c_k,d_k,\tilde{c}_{k+1},d_{k+1}$ 
to the standard generators of $\pi_1(\Sigma_{k+1})$.  
\end{proof}

The intersection form of $X_k$ is given by $(4k+3)H$, where 
\begin{equation}\label{eq: H}
H = \left[\begin{array}{cc}
0&1 \\
1&0
\end{array} 
\right].
\end{equation} 
A basis for the intersection form of $X_k$ is given by 
the following $4k+3$ geometrically dual pairs:   
\begin{equation*}
\begin{array}{cc}
([a_1 \times c_1], -[b_1 \times d_1]), & ([a_1 \times d_1], [b_1 \times c_1]),\\
([a_2 \times c_1], -[b_2 \times d_1]), & ([a_2 \times d_1], [b_2 \times c_1]),\\
\vdots & \vdots \\
([a_1 \times c_k], -[b_1 \times d_k]), & ([a_1 \times d_k], [b_1 \times c_k]),\\
([a_2 \times c_k], -[b_2 \times d_k]), & ([a_2 \times d_k], [b_2 \times c_k]),\\[2pt]
([(\tilde{a}_1\tilde{a}_2) \times \tilde{c}_{k+1}], -[b_1 \times d_{k+1}]), &
([a_1 \times d_{k+1}], [(\tilde{b}_1\tilde{b}_2) \times \tilde{c}_{k+1}]), \\[2pt]
([\Sigma_2 \times \{w_0\}], [\{z_0\} \times \Sigma_{2k+1}]). &
\end{array} 
\end{equation*}
Here, $[\,\cdot\,]$ denotes the homology class of the image $q(\,\cdot\,)$ in the quotient manifold $X_k$ for short.  
All throughout, the $\tilde{c}_{k+1}$ path plays the role of $\tilde{c}_2$ in \cite{AP: S2xS2}.

\section{Exotic $(2k-1)(S^2\times S^2)$}
\label{sec: (2k-1)(S2xS2)}

By performing appropriate $2k+4$ torus surgeries on $X_k$, we obtain exotic smooth structures on $(2k-1)(S^2\times S^2)$.  
For example, we can choose to perform the following torus surgeries:    
\begin{equation}\label{eq: 2k+4 surgeries}
\begin{array}{cc}
(a_1' \times c_1', a_1', -1), & 
(b_1' \times c_1'', b_1', -1), \\
(a_2' \times c_1', c_1', -1), &
(a_2'' \times d_1', d_1', -n), \\
(a_2' \times c_2', c_2', -1/p), &
(a_2'' \times d_2', d_2', -1/r), \\
(a_2' \times c_3', c_3', -1), &
(a_2'' \times d_3', d_3', -1), \\
 \vdots & \vdots
\\
(a_2' \times c_k', c_k', -1), &
(a_2'' \times d_k', d_k', -1), \\
(b_1'' \times d_{k+1}', d_{k+1}', -1), &
((\tilde{b}_1\tilde{b}_2) \times \tilde{c}_{k+1}', \tilde{c}_{k+1}', -1), 
\end{array} 
\end{equation}
where $k,n,p,r$\/ are integers satisfying
\begin{equation}\label{eq: npr}
k\geq 1, \ \
n\geq 1, \ \  
p\geq 0 \ \ {\rm and} \ \ 
r\geq 0.  
\end{equation}
The prime and double prime notations are explained in \cite{FPS}.  
The fourth, the fifth and the sixth surgeries are Luttinger surgeries 
(cf.~\cite{luttinger, ADK})
when $n=1$, $p\geq 1$ and $r\geq 1$, respectively.  

Let $\M$ denote the resulting closed $4$-manifold.  
When $n=1$, $\sM$ is symplectic for every triple $k,p,q$\/ satisfying (\ref{eq: npr}).  
When $k=1$, there are no $-1/p$\/ and $-1/r$\/ surgeries, and 
we just get $M_n^1$ in \cite{AP: S2xS2}.   
Using exactly the same argument as in \cite{AP: S2xS2}, 
we can prove the following.  

\begin{lem}\label{lem: presentation}
$\pi_1(\M)$ is generated by 
\begin{equation}\label{eq: generators}
a_1,b_1,a_2,b_2,c_1,d_1,\dots,c_k,d_k,
\tilde{c}_{k+1},d_{k+1}.  
\end{equation}
If $k\geq 2$, then the following relations hold in $\pi_1(\M):$  
\begin{gather*}
a_2=\tilde{c}_{k+1}^{-1} a_1\tilde{c}_{k+1}, \ \  
b_2=\tilde{c}_{k+1}^{-1} b_1 \tilde{c}_{k+1}, \ \ 
b_1=\tilde{c}_{k+1}^{-1} b_2 \tilde{c}_{k+1}, \\
[b_2,d_{k+1}]=1, \ \ 
[a_1^{-1}b_1^{-1}a_2,d_{k+1}]=1, \ \
[a_2^{-1}b_2^{-1}a_1,d_{k+1}]=1, \\
[b_1^{-1}, d_1^{-1}]=a_1,\ \  [a_1^{-1}, d_1]=b_1,\ \ 
[b_2^{-1}, d_1^{-1}]=c_1,\ \ [b_2, c_1^{-1}]^n =d_1,\\
[b_2^{-1}, d_2^{-1}]=c_2^p,\ \ [b_2, c_2^{-1}]=d_2^r,
\ \ \dots\, , \ \ 
[b_2^{-1}, d_k^{-1}]=c_k,\ \ [b_2, c_k^{-1}]=d_k,\\
\tilde{c}_{k+1}^{-1} a_1 a_2 \tilde{c}_{k+1} a_1^{-1} a_2^{-1}=d_{k+1}, \ \
[a_1, d_{k+1}^{-1}]=\tilde{c}_{k+1},\\
[a_1,c_1]=1, \ \ [b_1,c_1]=1,\ \ [a_2,c_1]=1, \ \ [a_2,d_1]=1, \ \ 
[b_1,d_{k+1}]=1, \\
[a_2,c_2]=1, \ \ [a_2,d_2]=1, \ \ \dots\, , \ \  
[a_2,c_k]=1, \ \ [a_2,d_k]=1, \\
[a_1,c_2]=1, \ \ [b_1,d_2]=1, \ \ [a_1,d_2]=1, \ \ [b_1,c_2]=1,\\
\vdots \hspace{1.9cm} \vdots \hspace{2cm} \vdots \hspace{2cm} \vdots 
\hspace{.7cm} \\
[a_1,c_k]=1, \ \ [b_1,d_k]=1, \ \ [a_1,d_k]=1, \ \ [b_1,c_k]=1,\\
[a_1,b_1][a_2,b_2]=1, \ \
[c_1,d_1]\cdots[c_k,d_k][\tilde{c}_{k+1},d_{k+1}]=1.
\end{gather*}
\end{lem}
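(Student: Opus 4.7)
The plan is to apply Van Kampen's theorem to the decomposition
\[
\M = \Bigl(X_k \setminus \bigsqcup_{i=1}^{2k+4} \nu T_i \Bigr) \cup_{\partial} \bigsqcup_{i=1}^{2k+4} (T^2 \times D^2),
\]
where $T_1,\ldots,T_{2k+4}$ are the surgery tori listed in (\ref{eq: 2k+4 surgeries}). The generators of $\pi_1(X_k)$ are provided by Lemma~\ref{lem: gen Xk}; adjoining the redundant loops $a_2, b_2$, which are conjugates of $a_1, b_1$ by $\tilde{c}_{k+1}$, yields the list (\ref{eq: generators}). Because each $T_i$ has codimension two, the same list generates $\pi_1(X_k \setminus \bigsqcup \nu T_i)$, and regluing the solid tori only imposes additional relations; hence (\ref{eq: generators}) generates $\pi_1(\M)$.

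For the relations I would split them into three kinds. First, the relations already present in $\pi_1(X_k)$: the two conjugation identities of Lemma~\ref{lem: gen Xk}, together with $b_1 = \tilde{c}_{k+1}^{-1} b_2 \tilde{c}_{k+1}$ (obtained by iterating the involution, using $\tau_1^2 = \mathrm{id}$); the surface relators $[a_1,b_1][a_2,b_2]=1$ and $[c_1,d_1]\cdots[c_k,d_k][\tilde{c}_{k+1},d_{k+1}]=1$ coming from $\Sigma_2$ and $\Sigma_{k+1}$; and the product-structure cross commutators $[a_i,c_j]=[a_i,d_j]=[b_i,c_j]=[b_i,d_j]=1$ for those pairs whose geometrically dual torus does not appear in (\ref{eq: 2k+4 surgeries}). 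Second, each of the $2k+4$ surgeries contributes one new relation: for a Lagrangian torus $\alpha \times \beta$ inside the product $\Sigma_2 \times \Sigma_{2k+1}$, the meridian in its complement is conjugate to the commutator $[\tilde{\alpha}, \tilde{\beta}]$ of the geometrically dual loops, and a $-1/p$ surgery with framing curve $\gamma$ identifies this meridian with $\gamma^{p}$. Applying this to the first $2k+2$ tori yields the eight commutator identities listed (with exponents drawn from $\{1,n,p,r\}$ according to the surgery coefficients), while the last two tori, which straddle the $\mathbb{Z}/2$ fixed locus, produce $\tilde{c}_{k+1}^{-1} a_1 a_2 \tilde{c}_{k+1} a_1^{-1} a_2^{-1} = d_{k+1}$ and $[a_1, d_{k+1}^{-1}] = \tilde{c}_{k+1}$. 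Third, the three commutation relations involving $d_{k+1}$ on the second line of the lemma arise from pushing the relevant tori off to parallel copies outside the surgery region and invoking the product structure near the middle level.

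The principal technical difficulty is the bookkeeping of basepoints, parallel push-offs, and meridian formulas under the quotient map $q \colon \Sigma_2 \times \Sigma_{2k+1} \to X_k$, particularly for the final two surgeries whose framing curves involve $\tilde{b}_1 \tilde{b}_2$ and $\tilde{c}_{k+1}'$. These tori interact with the fixed locus of $\tau_{k+1}$ in a nontrivial way, so one must lift to the double cover, compute meridians there via the standard product formula, and descend back through $q$; this is precisely where the mixed combinations $a_1 a_2$ and $b_1 b_2$, along with the conjugations by $\tilde{c}_{k+1}$, enter the surgery relations. Once this bookkeeping is carried out, the presentation stated in the lemma follows exactly as in the $k=1$ case treated in \cite{AP: S2xS2}.
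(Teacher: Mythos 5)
Your proposal follows essentially the same route as the paper, which itself gives no independent argument but simply invokes the Van Kampen/Luttinger-surgery computation of \cite{AP: S2xS2} and \cite{FPS}: generators survive removal of the codimension-two surgery tori, each surgery $(T,\gamma,-1/p)$ imposes the relation identifying $\gamma^{p}$ with the meridian, which is a commutator of the geometrically dual loops, and the remaining relations descend from the product structure and the $\Z/2$ quotient. Your outline correctly locates where each family of relations comes from (including the special role of the two surgeries involving $\tilde{c}_{k+1}$ and $d_{k+1}$), so it is consistent with the paper's intended proof.
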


\begin{lem}\label{lem: pi_1}
If $k\geq 2$, then $\pi_1(\M) \cong \Z/p \oplus \Z/r$.  In particular, 
$\pi_1(\Mkn)=0$
for every pair of integers $k\geq 2$ and $n\geq 1$.
\end{lem}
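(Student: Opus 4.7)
My plan is to work with the presentation of Lemma~\ref{lem: presentation} and show that every generator other than $c_2, d_2$ collapses to the identity in $\pi_1(\M)$, while $c_2, d_2$ commute and satisfy $c_2^p = d_2^r = 1$.

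First, I use the three conjugation relations at the top of the presentation to eliminate $a_2, b_2$ in favour of $a_1, b_1, \tilde{c}_{k+1}$; the third of them packages as $[b_1, \tilde{c}_{k+1}^2] = 1$.

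Next, granted the single identity $d_1 = 1$ (which I establish separately), the remainder is a straightforward cascade. The coupled relations $a_1 = [b_1^{-1}, d_1^{-1}]$ and $b_1 = [a_1^{-1}, d_1]$ immediately force $a_1 = b_1 = 1$, so $a_2 = b_2 = 1$ by the eliminated conjugations. With $b_2 = 1$, the surgery relations $c_j = [b_2^{-1}, d_j^{-1}]$ and $d_j = [b_2, c_j^{-1}]$ for $j \in \{3, \ldots, k\}$ give $c_j = d_j = 1$, and $c_1 = [b_2^{-1}, d_1^{-1}] = 1$. The formulas $d_{k+1} = \tilde{c}_{k+1}^{-1} a_1 a_2 \tilde{c}_{k+1} a_1^{-1} a_2^{-1}$ and $\tilde{c}_{k+1} = [a_1, d_{k+1}^{-1}]$ then give $d_{k+1} = \tilde{c}_{k+1} = 1$. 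The $j = 2$ Luttinger relations $[b_2^{-1}, d_2^{-1}] = c_2^p$ and $[b_2, c_2^{-1}] = d_2^r$ collapse to $c_2^p = d_2^r = 1$, and the second surface relation $[c_1, d_1]\cdots[c_k, d_k][\tilde{c}_{k+1}, d_{k+1}] = 1$ becomes $[c_2, d_2] = 1$. This yields $\pi_1(\M) \cong \Z/p \oplus \Z/r$, and setting $p = r = 1$ gives $\pi_1(\Mkn) = 0$.

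The hard part is thus proving $d_1 = 1$, which I expect to mirror the corresponding step in \cite{AP: S2xS2}. The tools are the four coupled relations $a_1 = [b_1^{-1}, d_1^{-1}]$, $b_1 = [a_1^{-1}, d_1]$, $c_1 = [b_2^{-1}, d_1^{-1}]$, $d_1 = [b_2, c_1^{-1}]^n$, the commutativity constraints $[a_1, c_1] = [b_1, c_1] = [a_2, c_1] = [a_2, d_1] = [b_1, d_{k+1}] = [b_2, d_{k+1}] = 1$, the conjugations $a_2 = \tilde{c}_{k+1}^{-1} a_1 \tilde{c}_{k+1}$ and $b_2 = \tilde{c}_{k+1}^{-1} b_1 \tilde{c}_{k+1}$, and the first surface relation $[a_1, b_1][a_2, b_2] = 1$. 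The first two coupled relations show that conjugation by $d_1$ acts on $\langle a_1, b_1 \rangle$ by an automorphism of order $6$ in the abelianisation, and pushing this through the remaining constraints triggers $d_1 = 1$. The bookkeeping is the main technical effort; crucially, the new $(c_j, d_j)$ pairs for $j \geq 3$ enter only through $b_2$-commutators and contribute nothing to this step, so the $k = 1$ argument of \cite{AP: S2xS2} adapts with only minor modifications.
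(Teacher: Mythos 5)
Your cascade from $d_1=1$ is correct and reproduces the second half of the paper's argument, but the entire difficulty of the lemma is concentrated in the one step you defer, and the mechanism you sketch for it does not deliver it. The relations $a_1=[b_1^{-1},d_1^{-1}]$ and $b_1=[a_1^{-1},d_1]$ do show that conjugation by $d_1$ preserves $\langle a_1,b_1\rangle$ and acts on its abelianization with order dividing $6$, but this places no constraint on $d_1$ itself: it is perfectly consistent with $d_1$ having infinite order, and you do not identify which of ``the remaining constraints'' would upgrade an eigenvalue count to the identity $d_1=1$. Moreover, $d_1=1$ is not the step that \cite{AP: S2xS2} proves, so the claim that your argument ``mirrors'' it is not accurate. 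The argument there (which the paper imports with $\tilde{c}_{k+1},d_{k+1}$ in place of $\tilde{c}_2,d_2$) first establishes $[b_1,b_2]=1$, using the relations $[b_2,d_{k+1}]=[b_1,d_{k+1}]=[a_1^{-1}b_1^{-1}a_2,d_{k+1}]=[a_2^{-1}b_2^{-1}a_1,d_{k+1}]=1$ together with $d_{k+1}=\tilde{c}_{k+1}^{-1}a_1a_2\tilde{c}_{k+1}a_1^{-1}a_2^{-1}$, $\tilde{c}_{k+1}=[a_1,d_{k+1}^{-1}]$ and the conjugation relations. Once $[b_1,b_2]=1$ is known, $b_1$ commutes with $d_1=[b_2,c_1^{-1}]^{n}$ because $[b_1,c_1]=1$, whence $a_1=[b_1^{-1},d_1^{-1}]=1$; only after $b_1$, $a_2$, $b_2$, $c_1$ are killed does $d_1=[b_2,c_1^{-1}]^{n}=1$ drop out. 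The logical order is therefore $[b_1,b_2]=1\Rightarrow a_1=1\Rightarrow\cdots\Rightarrow d_1=1$, not the reverse, and you still owe the $[b_1,b_2]=1$ computation (or a genuine substitute for it).

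A secondary omission: deriving $c_2^{p}=d_2^{r}=[c_2,d_2]=1$ and the triviality of the other generators only exhibits $\pi_1(\M)$ as a quotient of $\Z/p\oplus\Z/r$, since Lemma~\ref{lem: presentation} asserts that the listed relations hold, not that they constitute a complete presentation. The paper closes this gap by invoking the second assertion of Lemma~\ref{lem: gen Xk} --- that $c_2$ and $d_2$ have infinite order before the surgeries --- so that the $-1/p$ and $-1/r$ surgeries leave classes of order exactly $p$ and $r$ in first homology, furnishing the matching lower bound. You should add this (or an equivalent $H_1$ computation) to convert your surjection $\Z/p\oplus\Z/r\twoheadrightarrow\pi_1(\M)$ into the claimed isomorphism.
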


\begin{proof}
By arguing exactly the same way as in the proof of Theorem~9 in \cite{AP: S2xS2}, 
with $\tilde{c}_{k+1}$ and $d_{k+1}$ playing the roles of $\tilde{c}_2$ and $d_2$ in \cite{AP: S2xS2}, respectively, 
we can show that $[b_1,b_2]=1$ and then $a_1=1$.  
From $a_1=1$, we can easily deduce that all other generators are trivial except for $c_2$ and $d_2$.  Since 
\begin{equation*}
[c_2,d_2]=[c_1,d_1]\cdots[c_k,d_k][\tilde{c}_{k+1},d_{k+1}]=1,
\end{equation*} 
the remaining generators $c_2$ and $d_2$ commute.  
From Lemma~\ref{lem: gen Xk}, we deduce that $c_2$ and $d_2$ have infinite order before the surgeries (\ref{eq: 2k+4 surgeries}).  
We can now conclude that $\pi_1(\M)$ is abelian and is isomorphic to 
$\Z/p \oplus \Z/r$.  
\end{proof}

For the remainder of this section, let $p=r=1$ and $k\geq 2$.
The intersection form of $\Mkn$ is isomorphic to 
$(2k-1)H$\/ (see (\ref{eq: H})) with a basis given by
\begin{equation}\label{eq: Mkn basis}
\begin{array}{cc}
([a_1 \times c_2], -[b_1 \times d_2]), & ([a_1 \times d_2], [b_1 \times c_2]),\\
\vdots & \vdots \\
([a_1 \times c_k], -[b_1 \times d_k]), & ([a_1 \times d_k], [b_1 \times c_k]),\\[2pt]
([\Sigma_2 \times \{w_0\}], [\{z_0\} \times \Sigma_{2k+1}]). &
\end{array} 
\end{equation}
Hence the simply connected $4$-manifolds $\{ \Mkn \mid n\geq 1\}$ are all homeomorphic to $(2k-1)(S^2\times S^2)$ 
by Freedman's theorem in \cite{freedman}.  

Let $Z_k$ denote the spin symplectic $4$-manifold obtained from $X_k$ by performing
$2k+3$ Luttinger surgeries in (\ref{eq: 2k+4 surgeries}) with $p=r=1$, 
but not $(a_2'' \times d_1', d_1', -n)$ surgery.  
In other words, $\Mkn$  is obtained from $Z_k$ by performing $(a_2'' \times d_1', d_1', -n)$ surgery.  
We have $e(Z_k)=4k$, $\sigma(Z_k)=0$, $b_1(Z_k)=1$, $b_2(Z_k)=4k$, 
and $b_2^+(Z_k)=2k$.  
Let $A$\/ and $B$\/ denote the 2-dimensional cohomology classes of 
either $Z_k$ or $\Mkn$ 
that are Poincar\'e dual to the homology classes of $q(\Sigma_2 \times \{w_0\})$ and $q(\{z_0\} \times \Sigma_{2k+1})$, respectively.  

If $SW_{Z_k}(L)\neq 0$, then by the adjunction inequality, 
$L = sA + tB$, where $s$\/ and $t$\/ are even integers satisfying
$|s|\leq 2k$ and $|t|\leq 2$.  
Since the dimension of the Seiberg-Witten moduli space for $L$\/ has to be
nonnegative,  
\begin{equation*}
L^2 = 2st \geq 2e(Z_k)+ 3\sigma(Z_k) = 8k.  
\end{equation*}
Hence $s=\pm 2k$, $t=\pm 2$, $L=\pm(2kA + 2B)=\mp c_1(Z_k)$, and
by Taubes's theorem in \cite{taubes},
\begin{equation*}
|SW_{Z_k}(\pm(2kA+2B))|=1.
\end{equation*}
It now follows from \cite{FPS, ABP} that 
$SW_{\Mkn}(L)\neq 0$ only when $L=\pm(2kA+2B)$, and 
\begin{equation*}
|SW_{\Mkn}(\pm(2kA+2B))|=n.
\end{equation*} 
We conclude that $\Mkn$'s are mutually nondiffeomorphic.  
By Taubes's theorem in \cite{taubes},  
$\Mkn$ is nonsymplectic if $n\geq 2$.

\section{Exotic $(2k-1)(\CP\#\CPb)$}
\label{sec: (2k-1)(PPb)} 

Throughout this section, let $k\geq 2$ be an integer.  
Given an integer $m\geq 2$, 
let $\Mknm$ denote the result of performing an $m$-surgery in $\Mkn$ 
along one of the tori in (\ref{eq: Mkn basis}), say 
\begin{equation}\label{eq: log transform}
(a_1'\times c_2', c_2', +m).
\end{equation}  
By convention, we define $\Mkn(1)=\Mkn$.  

\begin{lem}\label{lem: complement}
If $k\geq 2$, then $\pi_1(\Mkn\setminus q(a_1'\times c_2'))=0$.  
\end{lem}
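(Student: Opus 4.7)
The plan is to run a Van Kampen argument on $\Mkn = (\Mkn \setminus \nu(T)) \cup \nu(T)$, where $T := q(a_1'\times c_2')$. Since $[T]^2 = 0$, the normal bundle of $T$ is trivial, so $\nu(T) \cong T \times D^2$ with $\partial \nu(T) \cong T \times S^1$. Writing $\mu$ for the meridian of $T$, Van Kampen together with Lemma~\ref{lem: pi_1} (applied with $p=r=1$) gives
\[
\pi_1\bigl(\Mkn \setminus \nu(T)\bigr) \big/ \langle\langle \mu \rangle\rangle \;\cong\; \pi_1(\Mkn) \;=\; 1,
\]
so $\pi_1(\Mkn \setminus \nu(T))$ is normally generated by $\mu$. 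The task therefore reduces to showing that $\mu$ itself is trivial in the complement.

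Next I would identify a presentation of $\pi_1(\Mkn \setminus \nu(T))$ by starting from Lemma~\ref{lem: presentation} and tracking which relations are affected by removing $T$. Inspecting the basis (\ref{eq: Mkn basis}) and the surgery tori in (\ref{eq: 2k+4 surgeries}), the only relation-contributing torus of Lemma~\ref{lem: presentation} with nonzero geometric intersection with $T$ is its dual $T^{*} := q(b_1 \times d_2)$, which meets $T$ transversely in one point; every other such torus can be perturbed off $T$ within its tubular neighborhood. Hence every relation of Lemma~\ref{lem: presentation} persists in $\pi_1(\Mkn \setminus \nu(T))$ except $[b_1, d_2] = 1$, which gets replaced by
\[
[b_1, d_2] \;=\; \mu^{\pm 1},
\]
as witnessed by the punctured surface $T^{*} \setminus \mathrm{int}(T^{*} \cap \nu(T))$, properly embedded in $\Mkn \setminus \nu(T)$ with its single boundary circle a meridian of $T$. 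In addition, the inclusion of $\partial \nu(T) \cong T \times S^1$ contributes $[\mu, a_1] = [\mu, c_2] = 1$ and confirms that $[a_1, c_2] = 1$ survives (via the parallel pushoff of $T$ sitting in $\partial \nu(T)$).

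To conclude, I would rerun the proof of Lemma~\ref{lem: pi_1} inside this modified presentation. That argument establishes $[b_1, b_2] = 1$ and then $a_1 = 1$ by the method of \cite{AP: S2xS2}, after which the Luttinger relation $[a_1^{-1}, d_1] = b_1$ immediately yields $b_1 = 1$; this chain of deductions uses none of the relations $[b_1, d_j] = 1$. Substituting $b_1 = 1$ into the modified relation forces $\mu = [b_1, d_2]^{\pm 1} = 1$, which combined with the opening paragraph gives $\pi_1(\Mkn \setminus \nu(T)) = 1$. The main obstacle I anticipate is verifying that the derivation of $a_1 = 1$ in Lemma~\ref{lem: pi_1} (ultimately resting on the analogous derivation in \cite{AP: S2xS2}) does not secretly invoke $[b_1, d_2] = 1$ at any step prior to killing $b_1$; this is the single point where a careful bookkeeping review of the argument from \cite{AP: S2xS2} is genuinely needed.
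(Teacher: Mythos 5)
Your proposal is correct and follows essentially the same route as the paper: both identify the meridian of $q(a_1'\times c_2')$ with a conjugate of $[b_1,d_2]^{\pm 1}$ via the geometrically dual torus $q(b_1\times d_2)$, observe that every relation of Lemma~\ref{lem: presentation} except $[b_1,d_2]=1$ survives in the complement, and then rerun the proof of Lemma~\ref{lem: pi_1} (with $p=r=1$) to kill all generators without invoking that one relation, which forces the meridian to die as well. The only difference is presentational: you phrase the reduction through Van Kampen and normal generation by $\mu$, while the paper phrases it as normal generation by the original generators; the caveat you flag about auditing the argument of \cite{AP: S2xS2} is exactly the point the paper's proof also rests on.
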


\begin{proof}
The $q(a_1'\times c_2')$ torus intersects the $q(b_1\times d_2)$ torus 
negatively once in $\Mkn$.  
Hence any meridian of the $q(a_1'\times c_2')$ torus will be a conjugate of 
$[b_1,d_2]^{\pm 1}$.  
It follows that $\pi_1(\Mkn\setminus q(a_1'\times c_2'))$ 
is normally generated by the generators listed in (\ref{eq: generators}).  
Note that all the relations in Lemma~\ref{lem: presentation} continue to hold in
$\pi_1(\Mkn\setminus q(a_1'\times c_2'))$, except possibly for the relation 
$[b_1,d_2]=1$.  
In the proof of Lemma~\ref{lem: pi_1} with $p=r=1$, we were able to kill all generators in (\ref{eq: generators}) 
without making use of the relation $[b_1,d_2]=1$.  
Hence the generators in (\ref{eq: generators}) are still trivial in 
$\pi_1(\Mkn\setminus q(a_1'\times c_2'))$ and our lemma follows.  
\end{proof}

Since $\pi_1(\Mkn\setminus q(a_1'\times c_2'))=0$, 
we can interpret (\ref{eq: log transform}) as a `generalized logarithmic 
transformation' of multiplicity $m$.   
When $n=1$, we can perturb the symplectic form on $\ssM$ such that
$q(a_1'\times c_2')$ becomes a symplectic submanifold.  
Hence our generalized logarithmic transformation can be performed 
symplectically (cf.~\cite{symington}), and 
the resulting $4$-manifold $\ssM (m)$ is symplectic.  
Note that $\pi_1(\Mknm)=0$, $e(\Mknm)=4k$, 
$\sigma(\Mknm)=0$, and $b_2^+(\Mknm)=2k-1$ for every triple of integers 
$k\geq 2$, $n\geq 1$, and $m\geq 1$.  

Since $\pi_1(\Mkn\setminus q(a_1'\times c_2'))=0$, 
Corollary~21 in \cite{park} applies and we conclude that 
\begin{equation}\label{eq: sw Mknm}
SW_{\Mknm}(L) = \left\{
\begin{array}{cl}
n & \text{if \ } L = \pm(2kA+2B)+jT, \\[2pt]
0 & {\rm otherwise},  
\end{array}\right.
\end{equation}
where $j\in \{-(m-1), -(m-3), \dots, m-3, m-1 \}$,  
and $T$\/ is the cohomology class of $\Mknm$
that is Poincar\'e dual to the core torus 
of the logarithmic transformation.   

Since every Seiberg-Witten basic class is characteristic, we must have
\begin{equation*}
w_2(\Mknm)\equiv \pm(2kA+2B)+(m-1)T\equiv (m-1)T \pmod{2}.
\end{equation*}
Hence we conclude that 
\begin{equation*}
w_2(\Mknm)\equiv \left\{
\begin{array}{cl}
0 & \text{if \ $m$\/ is odd}, \\[2pt]
T & \text{if \ $m$\/ is even}.  
\end{array}\right.
\end{equation*}
Since $T$\/ is primitive, $T\not\equiv 0\pmod{2}$.  
It follows that $\Mknm$ is spin if $m$\/ is odd 
and nonspin if $m$\/ is even.
By Freedman's theorem in \cite{freedman}, $\Mknm$ is homeomorphic to
$(2k-1)(S^2\times S^2)$ if $m$\/ is odd and 
homeomorphic to $(2k-1)(\CP\#\CPb)$ if $m$\/ is even.  

If $L$\/ and $L'$ are Seiberg-Witten basic classes of $\Mknm$,
then $(L-L')^2$ is either $0$ or $32k$\/  
by (\ref{eq: sw Mknm}).  
Hence $(L-L')^2$ is never $-4$ and we can deduce that every 
$\Mknm$ is irreducible.  
This concludes the proof of Theorem~\ref{thm: main}.

\begin{rmk}
We should point out that the torus surgeries in (\ref{eq: 2k+4 surgeries}) 
and (\ref{eq: log transform}) are not the only ones we could have chosen.  
We have verified that 
many other combinations of surgeries work just as well and give rise to alternative families of exotic smooth structures.  
\end{rmk}

\begin{rmk}
Recall that $M_1^1$ in \cite{AP: S2xS2} contains genus $2$ surfaces $q(\Sigma_2 \times \{\text{pt}\})$ with self-intersection $0$.  
To obtain an alternative construction of exotic smooth structures on $(2k-1)(S^2 \times S^2)$ for $k\geq 2$,
we can fiber sum $M_1^1$ with $k-1$ copies of $\Sigma_2 \times T^2$, along genus $2$ surfaces $q(\Sigma_2 \times \{\text{pt}\})$ and $\Sigma_2 \times \{\text{pt}'\}$, and then perform $4(k-1)$ Luttinger surgeries, 
$4$ Luttinger surgeries in each copy of $\Sigma_2\times T^2$ 
(cf.~\cite{AP: odd, ABP}).  
Details of this and a few other similar constructions will appear in a later version.  
\end{rmk}

\section*{Acknowledgments}
The first author was partially supported by an NSF grant.    
The second author was partially supported by an NSERC discovery grant.  
The authors thank Robert E. Gompf for helpful discussions.

\end{document}